\def\H{{\mathcal H}}
\def\m{{\mathbb M}}
\def\R{{\mathbb R}}
\newcommand{\Ker}{\mathsf{Ker}~ }
\newcommand{\Ran}{\mathsf{Ran}~ }
\newtheorem*{theorem*}{Theorem}
\numberwithin{equation}{section}
\newtheorem{theorem}{Theorem}[section]
\newtheorem{lemma}[theorem]{Lemma}
\begin{document}

\title[Perturbation theory for the matrix...]{Perturbation theory for the matrix square root and matrix modulus}
\author{Marcus Carlsson}
\address{Centre for Mathematical Sciences, Lund University\\Box 118, SE-22100, Lund,  Sweden\\}
\email{marcus.carlsson@math.lu.se}

\begin{abstract}
We provide first order perturbation formulas for the matrix square root (in the positive semi-definite case) and the matrix modulus (in the general case). The results are new for singular matrices, and extend previously known Fr\'{e}chet differentiability formulas provided by the Daleckii-Krein theorem.
\end{abstract}
\maketitle


\section{Introduction}

Let $\m_n$ denote the set of $n\times n $ complex matrices, let $\H_n$ denote the subset of Hermitian self-adjoint matrices and $\H_n^+$ the subset of positive semi-definite matrices. By the spectral theorem, each matrix $A\in\H_n$ can be decomposed as $A=U\Lambda_\alpha U^*$ where $\alpha$ are the eigenvalues, $\Lambda_{\alpha}$ the diagonal matrix with the vector $\alpha$ on the diagonal, and $U$ is a unitary matrix whose columns are the eigenvectors. For $A\in\H_n^+$, the matrix square root is then defined as \begin{equation}\label{spec_dec}\sqrt{A}=U\Lambda_{\sqrt{\alpha}} U^*,\quad A\in\H_n^+\end{equation}
where the square root is taken pointwise on $\alpha$. The matrix modulus is in turn defined as $$|A|=\sqrt{A^*A},\quad A\in\m_n.$$ In this article we study the perturbation theory for these transformations, i.e.~we are interested in leading order terms when $A$ is replaced by $A+E$ and $E$ is small.

If $A$ is nonsingular (i.e.~invertible), the first order perturbation of both operations can be completely characterized by the so called Daleckii-Krein theorem, showing that the two maps are in fact Fr\'{e}chet differentiable. To be more precise, if we introduce the matrix $[\sqrt{\cdot},\alpha]$ by setting $$[\sqrt{\cdot},\alpha]_{i,j}=\frac{1}{\sqrt{\alpha_i}+\sqrt{\alpha_j}}$$ and the notation $\hat E=U^*EU$ (where $A=U\Lambda_\alpha U^*$) then it holds that \begin{equation}\label{dk}\sqrt{A+E}=\sqrt{A}+U([\sqrt{\cdot},\alpha]\circ \hat E)U^*+O(\|E\|^2),\end{equation}
where $\circ$ denotes Hadamard multiplication. Note that the middle term is linear in $E$ and hence it equals the Fr\'{e}chet derivative of $\sqrt{A}$. We revisit the Daleckii-Krein theorem in Section \ref{secdk}. 

If $A$ is singular on the other hand, then the non-differentiability of $t\mapsto\sqrt{t}$ at $t=0$ clearly prohibits any Fr\'{e}chet differentiability of $\sqrt{A}$. Despite this, we derive a first order perturbation result for $\sqrt{A+E}$ similar to \eqref{dk}, where the ordo term is $O(\|E\|^{3/2})$. If $A$ has a non-trivial kernel, the term corresponding to $U([\sqrt{\cdot},\alpha]\circ \hat E)U^*$ is no longer linear, but can be explicitly computed using a Schur complement of $E$ in the basis provided by $U$. Hence it provides a concrete, albeit non-linear, formula for the first order perturbation. As it turns out, the theory holds for $A\mapsto A^{s}$ for all $s>1/3$ and provides a neat extension of the Daleckii-Krein theorem to such functions. More precisely, we show that $$(A+E)^s=A^s+L(E)+O(\|E\|^r)$$ where $r>1$ and the term $L(E)$ is not linear (unless $s\geq 1$ or $\Ker A=\{0\}$), but the non-linearity is confined to computing $D^s$ where $D$ is a particular Schur complement of $E$, we refer to Section \ref{sec_pert} for the details.

In Section \ref{sec3} this result is then applied to study of the perturbation of the matrix modulus, i.e.~the function $E\mapsto |A+E|$, where we lift the assumption that $A$ and $E$ are Hermitian. Again, we provide an elementary formula for its first order approximation with an error term of $O(\|E\|^{3/2})$. We remark that if we let $P_0$ be the projection onto the kernel of $A$, then the only non-linear part of the first order perturbation term is $|P_0 E P_0|$, and hence the expression is again readily computable.

The study of the matrix square root began with Cayley in 1858 \cite{cayley1858ii}, and in the light of its tremendous influence on mathematical analysis, numerical analysis as well as mathematical physics, it is rather surprising that its perturbation theory is not yet fully understood. We refer to Ch.~6-8 \cite{higham2008functions} for modern uses of matrix powers and matrix-modulus in numerical analysis, whereas a recent account of what is known on the theoretical side is found in \cite{bhatia2013matrix}, in particular Ch.~X.

\section{The Daleckii-Krein theorem}\label{secdk}

Given $A=U\Lambda_\alpha U^*\in\H_n$ and a perturbation $A+E\in \H_n$, we recall that the matrix $\hat E$ appearing in \eqref{dk} is defined as $$\hat E=U^*EU,$$ leaving the dependence on $A$ implicit. Given a real valued function $f$ some subset of $\R$ containing the spectrum of $A$, the standard functional calculus is defined as $$f(A)=U\Lambda_{f(\alpha)}U^*.$$ If $f$ is also a $C^1$ function, we introduce the matrix
\begin{equation}\label{flambda}[f,\alpha]({i,j})=
\left\{\begin{array}{cc}
  \frac{f(\alpha_i)-f(\alpha_j)}{\alpha_i-\alpha_j} & \alpha_i\neq \alpha_j \\
  \partial_i{f}(\alpha_i) & \alpha_i=\alpha_j
\end{array}\right.
\end{equation}

The version of the Daleckii-Krein theorem we need is a bit stronger than what is usually found in books, and reads as follows.

\begin{theorem}\label{thmdk}
Let $A=U\Lambda_\alpha U^*\in\H_n$ be given and let $f$ be $C^2$ in a neighborhood of $\alpha$. Then $$f(A+E)=f(A)+ U\Big([f,\alpha]\circ \hat E\Big)U^*+O(\|E\|^2)$$
where the size of the error is locally independent of $A$. In other words, there exists a constant $C>0$ such that
\begin{equation}\label{gtyh}\|f(A+E)-f(A)- U\Big([f,\alpha]\circ \hat E\Big)U^*\|\leq C\|E\|^2\end{equation} for all $E$ in a neighborhood of 0, which is also invariant for small perturbations of $A$ itself.
\end{theorem}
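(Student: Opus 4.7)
The plan is to prove Theorem \ref{thmdk} by reducing to a diagonal $A$, exploiting the Fourier representation of $f$ to reduce the spectral calculus to matrix exponentials, and then extracting the first- and second-order terms via Duhamel's formula. Unitary invariance of the claim is immediate: conjugating by $U^*$ replaces $(A,E)$ by $(\Lambda_\alpha,\hat E)$ without changing operator norms, so it suffices to prove that, for any neighborhood $\Omega$ of $\alpha$, there is a constant $C=C(\Omega,f)$ with
\begin{equation*}
\bigl\|f(\Lambda_\alpha+\hat E)-f(\Lambda_\alpha)-[f,\alpha]\circ\hat E\bigr\|\le C\|\hat E\|^2
\end{equation*}
whenever $\hat E$ is small enough that the spectrum of $\Lambda_\alpha+\hat E$ lies in $\Omega$ (which is stable under small perturbations of $A$ by Weyl's inequality, providing the uniformity). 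After multiplying $f$ by a smooth cutoff supported in the $C^2$-domain of $f$ and equal to $1$ on $\Omega$, we may assume $f\in C_c^2(\R)$; this does not affect $f(\Lambda_\alpha+\hat E)$ for small $\hat E$, and it ensures that $\hat f\in L^1$ and $\xi\mapsto\xi^2\hat f(\xi)\in L^1$, so that the inversion formula $f(B)=\frac{1}{2\pi}\int_\R \hat f(\xi)\,e^{i\xi B}\,d\xi$ is valid for every Hermitian $B$.

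Next I apply Duhamel's formula twice to the unitary $e^{i\xi(\Lambda_\alpha+\hat E)}$:
\begin{equation*}
e^{i\xi(\Lambda_\alpha+\hat E)}=e^{i\xi\Lambda_\alpha}+i\xi\int_0^1 e^{i\xi(1-s)\Lambda_\alpha}\hat E\,e^{i\xi s\Lambda_\alpha}\,ds+R(\xi,\hat E),
\end{equation*}
where $\|R(\xi,\hat E)\|\le \xi^2\|\hat E\|^2$ since all matrix exponentials involved are unitary. Integrating against $\hat f(\xi)/(2\pi)$ gives $f(\Lambda_\alpha+\hat E)=f(\Lambda_\alpha)+L+Q$ with the quadratic remainder satisfying $\|Q\|\le\frac{1}{2\pi}\bigl(\int_\R|\hat f(\xi)|\xi^2\,d\xi\bigr)\|\hat E\|^2$; this constant is finite, depends only on $f$ and the cutoff, and hence only on $\Omega$, yielding the required local uniformity.

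It then remains to identify $L$ with $[f,\alpha]\circ\hat E$. This is a direct entrywise computation in the diagonal basis: the $(i,j)$ entry of the integrand is $\hat E_{ij}\,e^{i\xi((1-s)\alpha_i+s\alpha_j)}$, whose $s$-integral equals $\hat E_{ij}\cdot\frac{e^{i\xi\alpha_i}-e^{i\xi\alpha_j}}{i\xi(\alpha_i-\alpha_j)}$ for $\alpha_i\ne\alpha_j$ and $\hat E_{ij}\,e^{i\xi\alpha_i}$ for $\alpha_i=\alpha_j$. Multiplying by $i\xi\hat f(\xi)/(2\pi)$ and integrating in $\xi$ recovers precisely $\frac{f(\alpha_i)-f(\alpha_j)}{\alpha_i-\alpha_j}\hat E_{ij}$ in the first case and $f'(\alpha_i)\hat E_{ij}$ in the second, i.e.\ $L=[f,\alpha]\circ\hat E$.

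The step I expect to be the main obstacle is not any single calculation but rather the insistence on a uniform constant: one must verify that both the cutoff, the bound on $\int|\hat f|\xi^2\,d\xi$, and the spectral-containment condition survive small perturbations of $A$ itself (not just of $E$). The Fourier route is chosen precisely because it bypasses the spectral decomposition of $A+E$, so the argument never has to track eigenvalue collisions or the non-uniqueness of eigenvectors, which would be delicate in a direct diagonalization approach; the bounds produced depend only on $\|f''\|_\infty$ and the size of the support of the cutoff, and these are plainly invariant under moving $A$ within $\Omega$.
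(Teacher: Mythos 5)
The paper does not actually prove Theorem~\ref{thmdk}; it cites the original Daleckii--Krein article \cite{daletskii1965integration} and explains how to extract \eqref{gtyh} from their estimates with some detective work, explicitly flagging the regularity issue (a naive reading of their bounds would require $f\in C^4$, and getting down to $C^2$ requires exploiting that the operator integrals reduce to finite sums in the matrix setting). So your Fourier/Duhamel argument is genuinely different in method, and it is a clean and natural route for this kind of result.

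However, there is a real gap at the crucial regularity step. You claim that after cutting off, $f\in C_c^2(\R)$ gives $\xi\mapsto\xi^2\hat f(\xi)\in L^1(\R)$. This is false. If $f\in C_c^2$ then $f''\in C_c\subset L^1$, so $\widehat{f''}(\xi)=-\xi^2\hat f(\xi)$ is merely in $C_0(\R)$ by Riemann--Lebesgue; it tends to $0$ but need not be integrable. Even $f\in C_c^3$ only gives $\xi^2\hat f(\xi)=o(|\xi|^{-1})$, which is still not enough for $L^1$. The condition $\int|\xi|^2|\hat f(\xi)|\,d\xi<\infty$ requires roughly $f\in C^{3+\varepsilon}$ (a Besov-type condition), which is essentially the same level of regularity as the crude Daleckii--Krein bound the paper is trying to improve upon. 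Your bound $\|Q\|\le\frac{1}{2\pi}\bigl(\int|\hat f(\xi)|\xi^2\,d\xi\bigr)\|\hat E\|^2$ is therefore vacuous for $f\in C^2$.

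The way to rescue the $C^2$ hypothesis is to avoid pulling absolute values inside the $\xi$-integral. Iterating Duhamel twice and working entrywise in the diagonal basis, the remainder becomes a finite sum over index triples $(i,k,j)$ of terms each carrying a factor
\begin{equation*}
\frac{1}{2\pi}\int_\R \hat f(\xi)\,(i\xi)^2\,e^{i\xi((1-s)\alpha_i+(s-t)\alpha_k+t\alpha_j)}\,d\xi,
\end{equation*}
and these oscillatory integrals evaluate to second-order divided differences of $f$, which are bounded by $\tfrac12\|f''\|_\infty$ even though $\xi^2\hat f\notin L^1$. This is exactly where the finite-dimensionality enters: the sum over $(i,k,j)$ is finite, so no absolute convergence of the $\xi$-integral is needed. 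As written, your proof skips this and therefore does not reach $C^2$; it proves the theorem only under a stronger smoothness hypothesis.

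Two minor points: the Duhamel remainder bound should carry a factor $\tfrac12$ (i.e. $\|R(\xi,\hat E)\|\le\tfrac12\xi^2\|\hat E\|^2$), which is harmless; and the uniformity-in-$A$ claim is handled correctly via the cutoff and Weyl's inequality.
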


The above statement can actually be deduced from the original article \cite{daletskii1965integration} with a bit of detective work, (in fact, it is curious to note that Daleckii and Krein do not even mention that they have shown Fr\'{e}chet differentiability). More precisely, formula (2.13) of \cite{daletskii1965integration} with $\varepsilon=1$, $n=1$, $H_1=E$ shows that $f(A+E)=f(A)+U\Big([f,\alpha]\circ \hat E\Big)U^*+R_1(1)$ where $R_1(1)$ is an error term, (although the notation in \cite{daletskii1965integration} looks rather different). The error term involves an iterated operator integral, which has a concrete estimate in (2.16), but a closer inspection of this estimate shows that we need to assume $f\in C^4$ to deduce \eqref{gtyh}. However, this is due to the fact that Daleckii and Krein work with operators on infinite dimensional spaces and, as observed in the introduction of \cite{daletskii1965integration}, the operator integrals reduce to finite sums in the matrix setting. With this in mind, the estimate (1.3) in \cite{daletskii1965integration} can be applied to the estimate (2.15) of $R_1(1)$, and \eqref{gtyh} follows. For this to work out, it is necessary that $f$ is $C^2$ in an interval $(a,b)$ including $\alpha$, but if this is not already the case, it can always be achieved by basic interpolation techniques, we omit the details.

If one only assumes $f\in C^1$, then Fr\'{e}chet differentiability appears e.g.~in \cite{bhatia2013matrix} (Theorem V.3.3) but there seems to be no easy way to obtain the more precise control on the remainder given in the above theorem. A basic proof of \eqref{thmdk} will appear in \cite{carlsson2018perturbation3}, and it can also be deduced from Theorem 6.9 in \cite{sendov2007higher} and a bit of work. It is interesting to note that if $f(t)=\sum_{j=0}^\infty a_jt^j$ is entire, the first order perturbation can also be computed upon expanding the terms in $f(A+E)=\sum_{j=0}^\infty a_j(A+E)^j$ and collecting those that only involve one $E$, but the resulting expression is quite different than \eqref{gtyh}. Estimation of higher order terms in this case is a straightforward matter, see e.g.~\cite{mathias1993approximation}.

\section{Perturbation of powers}\label{sec_pert}

The identity \eqref{dk} for $\sqrt{A+E}$ is an immediate application of Theorem \ref{thmdk} to $f(t)=\sqrt{t}$, and applies whenever $A\in\H_n^+$ has no kernel. On the other hand, if $A$ has a kernel it is definitely not Fr\'{e}chet differentiable in the classical sense. Despite this, we shall find a tractable formula for $o(\|E\|)$ approximation of $(A+E)^{1/p}$ for $1<p<3$. Our analysis relies on finer results concerning perturbation of eigenvalues and eigenvectors, recently shown in the adjacent paper \cite{carlsson2018perturbation1}.

Note that $(A+E)^{1/p}=U(\Lambda_\alpha+\hat E)^{1/p}U^*$ so we shall primarily work with $(\Lambda_\alpha+\hat E)^{1/p}$. Furthermore we decompose the space $\H_n$ into $\Ran \Lambda_\alpha\oplus \Ker \Lambda_\alpha$ which results in the representation of $\Lambda_\alpha$ via $\left(
                                                                 \begin{array}{cc}
                                                                   \Lambda_{\alpha^+} & 0 \\
                                                                   0 & 0 \\
                                                                 \end{array}
                                                               \right)$, where $\Lambda_{\alpha^+}$ is diagonal containing the non-zero eigenvalues $\alpha^+$. This should be compared with the decomposition (4.3) in \cite{carlsson2018perturbation1}. Note that we have chosen to flip the order of $\Lambda_{\alpha^+}$ and $0$ since it is clearly more natural for the present situation. As in (4.3) of \cite{carlsson2018perturbation1} we decompose $\hat E$ using a Schur complement to express the fourth quadrant;  \begin{equation}\label{spec_dec2}\Lambda_{\alpha}+\hat E=\left(
                                                                 \begin{array}{cc}
                                                                   \Lambda_{\alpha^+} & 0 \\
                                                                   0 & 0 \\
                                                                 \end{array}
                                                               \right)+\left(
                                                                 \begin{array}{cc}
                                                                   B & C \\
                                                                   C^* & C^*(\Lambda_{\alpha^+}+B)^{-1}C+D \\
                                                                 \end{array}
                                                               \right),\end{equation}
A nice feature with Schur complements is that  $\Lambda_\alpha+\hat E$ is positive semi-definite if and only if $D$ is (as long as $\Lambda_{\alpha^+}+B\geq 0$), see e.g.~\cite{boyd2004convex}. Since we are only interested in perturbations that stay in $\H_n^+$ we will assume that $D\geq 0$ from the outset.

Given $s>0$ we define $[(\cdot)^{s},\alpha]_1$ as in \eqref{flambda} except that we set it to 1 when $\alpha_{i}=\alpha_{j}=0$, i.e.
\begin{equation}\label{flambda2}([(\cdot)^{s},\alpha]_1)_{i,j}=
\left\{\begin{array}{cc}
  \frac{\alpha_{i}^{s}-\alpha_{j}^{s}}{\alpha_{i}-\alpha_{j}} & \alpha_{i}\neq \alpha_{j} \\
  s\alpha_{i}^{s-1} & \alpha_{i}=\alpha_{j}>0\\
  1 & \alpha_{i}=\alpha_{j}=0
\end{array}\right.
\end{equation}
Note that for $s>1$, $[(\cdot)^{s},\alpha]$ is defined also via \eqref{flambda}, and that $[(\cdot)^{s},\alpha]_1$ differs from $[(\cdot)^{s},\alpha]_1$ on indices such that $\alpha_i=\alpha_j=0$. The main result of this section reads as follows.

\begin{theorem}\label{thm_frac_pow}
Let $1<p<3$ and set $r=\min(1+1/p,3/p)$. Suppose that both $A$ and its perturbation $A+E$ are positive semi-definite. Then, in terms of the decomposition \eqref{spec_dec2} of $E$, we have
\begin{equation}\label{lo}(A+E)^{1/p}=A^{1/p}+U\left([(\cdot)^{1/p},\alpha]_1\circ \left(
                                                                 \begin{array}{cc}
                                                                   B & C \\
                                                                   C^* & D^{1/p} \\
                                                                 \end{array}
                                                               \right)\right)U^*+O(\|E\|^r)\end{equation} where the size of the error is locally independent of $\alpha^+$.
\end{theorem}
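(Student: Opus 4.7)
Plan: Since $(A+E)^{1/p} = U (\Lambda_\alpha + \hat E)^{1/p} U^*$, I reduce to analyzing $M := \Lambda_\alpha + \hat E$ in the block form of \eqref{spec_dec2} with $M_{11} := \Lambda_{\alpha^+} + B$ (positive definite for small $\|E\|$), $M_{12} := C$, and $M_{22} := C^* M_{11}^{-1} C + D$. The strategy is to unitarily block-diagonalize $M$ by a near-identity $W$ into $\diag(\tilde M_{11}, \tilde M_{22})$ with $\tilde M_{11}$ close to $\Lambda_{\alpha^+} + B$ and $\tilde M_{22}$ close to $D$; then compute the $1/p$-th power of each block separately --- Daleckii-Krein (Theorem \ref{thmdk}) for the smooth positive-definite top block, Loewner-Heinz for the small bottom block --- and conjugate back by $W$.

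The first and main step is to construct $W = I + N + O(\|E\|^2)$ with $N$ anti-Hermitian and block off-diagonal, such that $W^* M W$ is exactly block diagonal with $\tilde M_{11} = \Lambda_{\alpha^+} + B + O(\|E\|^2)$ and, crucially, $\tilde M_{22} = D + O(\|E\|^3)$. Solving the leading Sylvester equation $M_{11} N_{12} + C = 0$ (the $M_{22}$ term is negligible since $M_{22} = O(\|E\|)$) gives $N_{12} = -\Lambda_{\alpha^+}^{-1} C + O(\|E\|^2)$. The sharper $O(\|E\|^3)$ precision on $\tilde M_{22}$ arises because the contribution $C^* \Lambda_{\alpha^+}^{-1} C$ in $M_{22}$ combines with $[M, N]_{(2,2)} \approx -2 C^* \Lambda_{\alpha^+}^{-1} C$ and $-(NMN)_{(2,2)} \approx C^* \Lambda_{\alpha^+}^{-1} C$ from the second-order expansion of $W^* M W$, and the three add to $0$, leaving only $D + O(\|E\|^3)$. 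Producing $W$ with this precision is exactly the content of the refined eigenvector perturbation analysis developed in \cite{carlsson2018perturbation1}.

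With the block-diagonalization in hand, Theorem \ref{thmdk} applied to $\tilde M_{11}$ (whose spectrum lies in a neighborhood of the positive set $\alpha^+$) yields $\tilde M_{11}^{1/p} = \Lambda_{\alpha^+}^{1/p} + [(\cdot)^{1/p}, \alpha^+] \circ B + O(\|E\|^2)$. The Loewner-Heinz inequality $\|X^{1/p} - Y^{1/p}\| \le c_p \|X - Y\|^{1/p}$ for PSD matrices (valid since $1/p \in (1/3, 1)$), combined with $\tilde M_{22} = D + O(\|E\|^3)$, gives $\tilde M_{22}^{1/p} = D^{1/p} + O(\|E\|^{3/p})$. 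Conjugating back via the exact identity $W X W^* = X + [N, X] + N X N^*$: for block-diagonal $X$, the $(1,2)$ block equals $N_{12} X_{22} - X_{11} N_{12}$, whose dominant piece is $-\Lambda_{\alpha^+}^{1/p} N_{12} = \Lambda_{\alpha^+}^{1/p - 1} C$, matching $[(\cdot)^{1/p}, \alpha]_1 \circ C$ entry-wise by \eqref{flambda2}. The diagonal blocks receive $O(\|E\|^2)$ corrections from $N X N^*$, and the $(1,2)$ block gets a residual $N_{12} \tilde M_{22}^{1/p} = O(\|E\|^{1+1/p})$.

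The error budget is $O(\|E\|^2)$ from DK, $O(\|E\|^{3/p})$ from Loewner-Heinz, and $O(\|E\|^{1+1/p})$ from the commutator with the small block; since $2 \ge r = \min(1+1/p, 3/p)$ throughout $p \in (1, 3)$, the combined bound is $O(\|E\|^r)$. Local uniformity in $\alpha^+$ descends from the uniformity of each constant (Daleckii-Krein, Loewner-Heinz, and $\|\Lambda_{\alpha^+}^{-1}\|$, all controlled by $\min \alpha^+$). The main obstacle is securing the precision $\tilde M_{22} = D + O(\|E\|^3)$: a naive $O(\|E\|^2)$ block-diagonalization would only yield $\tilde M_{22}^{1/p} = D^{1/p} + O(\|E\|^{2/p})$, insufficient for $p > 3/2$; the second-order cancellation described above is therefore the essential technical ingredient, and is precisely what is delivered by the eigenvector perturbation results of \cite{carlsson2018perturbation1}.
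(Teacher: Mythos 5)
Your proposal is correct, but it takes a genuinely different route from the paper's. You block-diagonalize the full matrix $\Lambda_\alpha+\hat E$ (including $D$) in one step by a near-identity unitary $W$, exploiting the second-order cancellation of the three $C^*\Lambda_{\alpha^+}^{-1}C$ contributions (from $M_{22}$, from $[M,N]_{22}$, and from $\tfrac12[[M,N],N]_{22}$) to achieve the crucial precision $\tilde M_{22}=D+O(\|E\|^3)$; you then apply Daleckii--Krein only to the $l\times l$ top block, and Loewner--Heinz/Wihler to the small bottom block, before conjugating back. The paper instead splits $\hat E=\tilde E+\Lambda_\Delta$, where $\tilde E$ is the Schur form with $D=0$. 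Because $\Lambda_\alpha+\tilde E$ has rank exactly $l$, the paper can replace $t^{1/p}$ by a $C^1$ cutoff $f_p$ and apply Daleckii--Krein directly at the full $n\times n$ level to get the $B,C$ contributions and the $O(\|E\|^2)$ error, without first diagonalizing; the residual problem---showing that adding $\Lambda_\Delta$ merely adds $\operatorname{diag}(0,D^{1/p})+O(\|E\|^r)$---is then handled via the exact spectral decomposition $V$ of $\Lambda_\alpha+\tilde E$, the projection lemmas (Lemmas~\ref{gt}--\ref{gt2}), and a Gram--Schmidt correction $V_{ap}\to W$ plus Wihler. Both routes ultimately need a near-identity unitary with $O(\|E\|)$ off-diagonal corrections and a high-precision handle on how the kernel of $\Lambda_\alpha$ rotates, so the technical effort is comparable. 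Your route is arguably more streamlined conceptually, but it outsources the existence and precision of $W$ to \cite{carlsson2018perturbation1} without details; in a self-contained write-up you would need to verify that the (nonlinear) off-diagonal block equation admits a solution $N_{12}=-\Lambda_{\alpha^+}^{-1}C+O(\|E\|^2)$ with a genuinely unitary $W$ (e.g.\ $W=e^N$ or via Gram--Schmidt), and confirm that the higher-order corrections to $N_{12}$ do not spoil the $O(\|E\|^3)$ estimate on $\tilde M_{22}$ (they contribute only $C^*N_{12}^{(2)}+(N_{12}^{(2)})^*C=O(\|E\|^3)$, so this is fine). One small inaccuracy: the identity $WXW^*=X+[N,X]+NXN^*$ is exact only for $W=I+N$, which is not unitary; this costs an additional $O(\|E\|^2)$ but does not affect the final bound.
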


We begin with some preliminary results. We will frequently need the following inequality \begin{equation}\label{lipfrac}\|Z^{1/p}-{A}^{1/p}\|_2\leq n^{\frac{p-1}{2}}{\|Z-A\|_2}^{1/p},\end{equation}
which surprisingly seems to have been established only recently by T. P. Wihler \cite{wihler2009holder}. We will refer to this as Wihler's inequality.

Next we need three lemmas concerning perturbation of projection operators onto the kernel and cokernel of $A$. Let $\Gamma_1$ be a circle enclosing the non-zero spectrum of $A$, but not 0, and $\Gamma_0$ a circle doing the opposite. The proof utilizes the two projections
\begin{equation}\label{prj}P_i(\tilde E)=\int_{\Gamma_i}(zI-(\Lambda_{\alpha}+\tilde E))^{-1}\frac{dz}{2\pi i},\quad i=0,1\end{equation} where we use $\tilde{E}$ to not confuse this perturbation with $E$ or $\hat E$.
\begin{lemma}\label{gt} Given $\tilde E\in\H_n$ and decomposition $\tilde E=\left(
                                                                 \begin{array}{cc}
                                                                   \tilde B & \tilde C \\
                                                                   \tilde C^* & \tilde D \\
                                                                 \end{array}
                                                               \right)$  we have $$P_1(\tilde E)=\left(
                                                                 \begin{array}{cc}
                                                                   I & \Lambda_{\alpha^+}^{-1}\tilde C \\
                                                                   \tilde C^* \Lambda_{\alpha^+}^{-1} & 0 \\
                                                                 \end{array}
                                                               \right)+O(\|E\|^2),\quad P_0(\tilde E)=\left(
                                                                 \begin{array}{cc}
                                                                   0 & -\Lambda_{\alpha^+}^{-1}\tilde C \\
                                                                   -\tilde C^* \Lambda_{\alpha^+}^{-1} & I \\
                                                                 \end{array}
                                                               \right)+O(\|E\|^2)$$ and the size of the errors are locally independent of $\alpha^+$.
\end{lemma}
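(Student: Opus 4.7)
The plan is to use the Riesz projection formula \eqref{prj} together with the first order resolvent expansion. Expanding the Neumann series gives
$$
(zI-(\Lambda_\alpha+\tilde E))^{-1}=(zI-\Lambda_\alpha)^{-1}+(zI-\Lambda_\alpha)^{-1}\tilde E(zI-\Lambda_\alpha)^{-1}+\sum_{k\geq 2}(\ldots),
$$
and the tail, once integrated over the fixed contour $\Gamma_i$, contributes $O(\|\tilde E\|^2)$. The contours $\Gamma_0$ and $\Gamma_1$ can be fixed in terms of $\mathrm{dist}(0,\alpha^+)$ and $\max \alpha^+$, so they remain valid for small perturbations of $\alpha^+$, which gives the claimed local uniformity.

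In the block basis determined by $\Ran\Lambda_\alpha\oplus\Ker\Lambda_\alpha$ the unperturbed resolvent is diagonal:
$$
(zI-\Lambda_\alpha)^{-1}=\begin{pmatrix}(zI-\Lambda_{\alpha^+})^{-1} & 0\\ 0 & z^{-1}I\end{pmatrix},
$$
so the first order term in the projection becomes the block matrix
$$
\begin{pmatrix}(zI-\Lambda_{\alpha^+})^{-1}\tilde B(zI-\Lambda_{\alpha^+})^{-1} & z^{-1}(zI-\Lambda_{\alpha^+})^{-1}\tilde C \\ z^{-1}\tilde C^*(zI-\Lambda_{\alpha^+})^{-1} & z^{-2}\tilde D\end{pmatrix},
$$
and each block reduces to a scalar contour integral of a rational function of $z$, which I would evaluate entrywise via residues.

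The key calculation then splits by contour. On $\Gamma_1$, which encloses every $\alpha_k^+$ but not $0$, the top left entries give $\oint\frac{dz/(2\pi i)}{(z-\alpha_k^+)(z-\alpha_l^+)}=0$ by residue cancellation (both for $\alpha_k^+\neq \alpha_l^+$ and for the double pole case), the bottom right vanishes because $z^{-2}$ is holomorphic inside $\Gamma_1$, while the off-diagonal entries pick up a single residue at $z=\alpha_k^+$ contributing $(\alpha_k^+)^{-1}$, i.e.\ the factor $\Lambda_{\alpha^+}^{-1}\tilde C$. Adding the zeroth order projection $\mathrm{diag}(I,0)$ yields the formula for $P_1(\tilde E)$. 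The computation for $\Gamma_0$ is symmetric: the top left and bottom right vanish (poles lie outside $\Gamma_0$, respectively $\tilde D$ is analytic at $0$ to leading order in the integrand once the $z^{-2}$ residue is checked), and the off-diagonal entries now pick up the residue at $z=0$ which gives the opposite sign $-(\alpha_k^+)^{-1}$, leading to $P_0(\tilde E)$ as stated.

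The only real obstacle is controlling the error uniformly in $\alpha^+$. This reduces to showing that $\sup_{z\in\Gamma_i}\|(zI-\Lambda_\alpha)^{-1}\|$ stays bounded, and that the higher order Neumann terms summed against the bounded length of $\Gamma_i$ are $O(\|\tilde E\|^2)$. Both follow once $\Gamma_0$ is chosen as a circle of radius $\tfrac12\min\alpha^+$ around $0$ and $\Gamma_1$ as a circle of radius $2\max\alpha^+$ around the centroid of $\alpha^+$; these contours remain separated from the spectrum of $\Lambda_\alpha+\tilde E$ for sufficiently small $\tilde E$ and for $\alpha^+$ varying in a small neighborhood, and the resolvent norm on them is bounded by a constant depending only on that neighborhood, which is precisely the local independence claim.
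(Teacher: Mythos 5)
Your proof is correct in substance and follows the alternative route that the paper only mentions in passing: computing $P_i(\tilde E)$ directly from the Riesz integral \eqref{prj} by a first-order resolvent (Neumann) expansion and evaluating each block entrywise by residues. The residue bookkeeping is right — in the first-order block both diagonal entries vanish (the two simple poles in the $(1,1)$ block cancel, the double poles there and the $z^{-2}$ pole in the $(2,2)$ block have zero residue) and the off-diagonal entries produce $\pm\Lambda_{\alpha^+}^{-1}\tilde C$ from the single surviving residue at $z=\alpha_k^+$ respectively $z=0$ — and controlling the tail via the geometric series together with a uniform bound on $\sup_{z\in\Gamma_i}\|(zI-\Lambda_\alpha)^{-1}\|$ is the right mechanism for the claimed local independence of $\alpha^+$. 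The paper's primary proof is considerably shorter: it applies Theorem~\ref{thmdk} to a $C^2$ function $f$ equal to $1$ near $\alpha^+$ and $0$ near $0$. Then $f(\Lambda_\alpha)=\mathrm{diag}(I,0)$, the divided-difference matrix $[f,\alpha]$ vanishes whenever both arguments sit in the same flat piece of $f$ and equals $1/\alpha_i^+$ in the off-diagonal blocks, and the $O(\|\tilde E\|^2)$ error plus its local uniformity come for free from Theorem~\ref{thmdk}. Your argument is more elementary and self-contained; the paper's trades that for brevity by leaning on a result it has already established.

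One concrete slip in your write-up: the proposed $\Gamma_1$, a circle of radius $2\max\alpha^+$ centred at the centroid $c$ of $\alpha^+$, necessarily encloses the origin, since $c\le\max\alpha^+<2\max\alpha^+$. That violates the requirement that $\Gamma_1$ exclude $0$ and would let the $\Gamma_1$ integral also pick up residues at $z=0$. A valid choice is any circle whose real interval is, say, $(\tfrac12\min\alpha^+,\,2\max\alpha^+)$ — e.g.\ centre $\tfrac14\min\alpha^+ + \max\alpha^+$ and radius $\max\alpha^+ - \tfrac14\min\alpha^+$ — which stays away from $0$ while still enclosing $\alpha^+$ for all nearby configurations; the remainder of your argument is unaffected once this is fixed.
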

\begin{proof}
This follows by Theorem \ref{thmdk} applied to a $C^2$ function which is 0 at 0 and 1 in a neighborhood of $\alpha^+$, or vice versa. It can also be deduced directly from \eqref{prj} using basic residue calculus.

\end{proof}

\begin{lemma}\label{gt1} Let $p,r$ be as in Theorem \ref{thm_frac_pow}. Let $Z\in\H^+_n$ be such that $\Ker Z\perp \Ker A$, and let $\tilde E$ be as in the previous lemma. Let $b>0$ be an upper bound for both $\|\tilde E\|$ and $\|Z\|$. Then $$\left(P_0(\tilde E)ZP_0(\tilde E)\right)^{1/p}-Z^{1/p}=O(b^r),$$
and the size of the errors are locally independent of $\alpha^+$.
\end{lemma}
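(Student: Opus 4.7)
The strategy is to use the identity $(P_0(\tilde E) Z P_0(\tilde E))^{1/p} = V (V^* Z V)^{1/p} V^*$, valid for any isometry $V$ with $VV^* = P_0(\tilde E)$, and then to compare this with $Z^{1/p}$. Read in the sense that makes the lemma useful in Theorem \ref{thm_frac_pow} (and that is consistent with $Z\in\H_n^+$), the hypothesis $\Ker Z\perp \Ker A$ says that $Z$ is supported on $\Ker A$, so in the block decomposition of \eqref{spec_dec2} we have $Z=V_0 Z_{22} V_0^*$ where $V_0=\begin{pmatrix} 0 \\ I \end{pmatrix}$ is the canonical inclusion of $\Ker A$ into $\C^n$ and $Z_{22}\geq 0$ with $\|Z_{22}\|\leq b$. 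In particular $Z^{1/p}=V_0 Z_{22}^{1/p} V_0^*$.

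For the isometry I take the canonical choice $V:=P_0(\tilde E)V_0\bigl(V_0^*P_0(\tilde E)V_0\bigr)^{-1/2}$, which is well defined for small $b$. The key quantitative input comes from Lemma \ref{gt}: the $O(b)$-part of $P_0(\tilde E)$ is purely off-diagonal in the $\Ran A\oplus \Ker A$ decomposition, so it vanishes when sandwiched between $V_0^*$ and $V_0$. Consequently $V_0^*P_0(\tilde E)V_0=I+O(b^2)$, and setting $W:=V_0^*V=\bigl(V_0^*P_0(\tilde E)V_0\bigr)^{1/2}$ we obtain $W=I+O(b^2)$, which is sharper than the direct bound $\|V-V_0\|=O(b)$. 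Since $V^*ZV=(V^*V_0)Z_{22}(V_0^*V)=W^*Z_{22}W$, this yields $\|V^*ZV-Z_{22}\|=O(b^2\|Z_{22}\|)=O(b^3)$, and Wihler's inequality \eqref{lipfrac} then gives
\[
\|(V^*ZV)^{1/p}-Z_{22}^{1/p}\|=O(b^{3/p}).
\]

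To finish, I decompose
\[
V(V^*ZV)^{1/p}V^*-V_0 Z_{22}^{1/p}V_0^* = V\bigl[(V^*ZV)^{1/p}-Z_{22}^{1/p}\bigr]V^* + (V-V_0)Z_{22}^{1/p}V^* + V_0 Z_{22}^{1/p}(V-V_0)^*.
\]
The first term is $O(b^{3/p})$ from the previous step, while the remaining two are each bounded by $\|V-V_0\|\,\|Z_{22}^{1/p}\|=O(b)\,O(b^{1/p})=O(b^{1+1/p})$. Summing yields $O(b^{\min(3/p,\,1+1/p)})=O(b^r)$, with the constants inheriting their local uniformity in $\alpha^+$ from Lemma \ref{gt}. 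The hard part is the refinement $W=I+O(b^2)$, relying on the purely off-diagonal structure of the leading-order part of $P_0(\tilde E)$: without it, a naive Wihler bound on $\|P_0 Z P_0-Z\|=O(b^2)$ would only produce $O(b^{2/p})$, which is strictly weaker than $O(b^r)$ once $p>1$, and the argument would fail.
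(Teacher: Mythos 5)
Your proof is correct and follows essentially the same route as the paper: both rest on Lemma \ref{gt}'s expansion of $P_0(\tilde E)$, the crucial observation that its $O(b)$ part is off-diagonal so that $V_0^*P_0(\tilde E)V_0=I+O(b^2)$, a Gram--Schmidt-type orthonormalization, and one application of Wihler's inequality, leading to the same split of the error into the $O(b^{3/p})$ and $O(b^{1+1/p})$ contributions. Your use of the exact isometry identity $(P_0 Z P_0)^{1/p}=V(V^*ZV)^{1/p}V^*$ is a slightly tidier bookkeeping device than the paper's explicit block computation of $P_0ZP_0$ up to an $O(b^3)$ remainder, but it is a reformulation of the same argument rather than a different one.
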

\textit{Remark:} Applying Wihler's inequality \eqref{lipfrac} to $Z$ and $P_0(\tilde E)ZP_0(\tilde E)$ we achieve an error of $O(b)$. This shows that this inequality, while being sharp in general, can be improved for perturbations with a special structure.

\begin{proof}
As usual we choose a basis such that $A=\left(
                                                                 \begin{array}{cc}
                                                                   \Lambda_{\alpha^+} & 0 \\
                                                                   0 & 0 \\
                                                                 \end{array}
                                                               \right)$ and we can pick the vectors in $\Ker A$ such that also $Z$ diagonalizes, i.e. $Z=\left(
                                                                 \begin{array}{cc}
                                                                   0 & 0 \\
                                                                   0 & \Lambda_\zeta  \\
                                                                 \end{array}
                                                               \right)$. By Lemma \ref{gt1} we have
\begin{align*}&P_0(\tilde E)Z P_0(\tilde E)=
\left(
\begin{array}{cc}
0 & -\Lambda_{\alpha^+}^{-1}\tilde C \Lambda_\zeta  \\
- \Lambda_\zeta  \tilde C^* \Lambda_{\alpha^+}^{-1} & \Lambda_\zeta  \\
\end{array}
\right)+O(b^3)=\\&\left(
\begin{array}{c}
 -\Lambda_{\alpha^+}^{-1}\tilde C  \\
 I \\
\end{array}
\right)\Lambda_\zeta \left(
\begin{array}{c}
-\Lambda_{\alpha^+}^{-1}\tilde C  \\
 I \\
\end{array}
\right)^*+O(b^3),
\end{align*}
which is almost a spectral decomposition of $P_0(\tilde E)Z P_0(\tilde E)$, except for the the $O(b^3)$-term and the fact that $\left(
\begin{array}{c}
-\Lambda_{\alpha^+}^{-1}\tilde C  \\
 I \\
\end{array}
\right)$ is not orthogonal. However, using induction and the Gram-Schmidt orthogonalization process, it is easy to see that there exists a matrix $W$ with $W^*W=I$ and
$$\left(
\begin{array}{c}
-\Lambda_{\alpha^+}^{-1}\tilde C  \\
 I \\
\end{array}
\right)=W+O(b^2),$$ where the error is locally independent of $\alpha^+$. By the above identities and Wihler's inequality \eqref{lipfrac}, it follows that
\begin{align*}   &\left(P_0(\tilde E)Z P_0(\tilde E)\right)^{1/p}=\left(W\Lambda_\zeta W^*+O(b^3)\right)^{1/p}=W\Lambda_\zeta ^{1/p}W^*+O(b^{3/p})=\\&
\left(
\begin{array}{c}
-\Lambda_{\alpha^+}^{-1}\tilde C  \\
 I \\
\end{array}
\right)\Lambda_\zeta ^{1/p}\left(
\begin{array}{c}
-\Lambda_{\alpha^+}^{-1}\tilde C  \\
 I \\
\end{array}
\right)^*+O(b^2 b^{1/p})+O(b^{3/p})=\\&\left(
\begin{array}{cc}
0 & -\Lambda_{\alpha^+}^{-1}\tilde C \Lambda_\zeta ^{1/p} \\
- \Lambda_\zeta ^{1/p} \tilde C^* \Lambda_{\alpha^+}^{-1} & \Lambda_\zeta ^{1/p} \\
\end{array}
\right)+O(b^{3/p})=\\&\left(
\begin{array}{cc}
0 & 0 \\
0 & \Lambda_\zeta ^{1/p} \\
\end{array}
\right)+\left(
\begin{array}{cc}
0 & -\Lambda_{\alpha^+}^{-1}\tilde C \Lambda_\zeta ^{1/p} \\
- \Lambda_\zeta ^{1/p} \tilde C^* \Lambda_{\alpha^+}^{-1} & 0 \\
\end{array}
\right)+O(b^{3/p}).
\end{align*}
The first matrix equals $Z^{1/p}$, the second matrix is $O(b^{1+1/p})$, and the lemma follows.
\end{proof}

\begin{lemma}\label{gt2} In the setting of the previous lemma, we also have $$\left(P_1(\tilde E)ZP_0(\tilde E)\right)^{1/p}=O(b^2),\quad \left(P_1(\tilde E)ZP_1(\tilde E)\right)^{1/p}=O(b^3)$$
and the size of the errors are locally independent of $\alpha^+$.
\end{lemma}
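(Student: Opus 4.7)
The plan is to reduce both claims to straightforward operator-norm estimates by exploiting the block structures of $P_0(\tilde E)$ and $P_1(\tilde E)$ from Lemma \ref{gt}, and then invoke Wihler's inequality \eqref{lipfrac} to pass to the $1/p$-power formulation. Following the setup of the proof of Lemma \ref{gt1}, I would pick a basis in which $A=\bigl(\begin{smallmatrix}\Lambda_{\alpha^+}&0\\0&0\end{smallmatrix}\bigr)$ and $Z=\bigl(\begin{smallmatrix}0&0\\0&\Lambda_\zeta\end{smallmatrix}\bigr)$; in this basis the unperturbed projection is $P_1(0)=\bigl(\begin{smallmatrix}I&0\\0&0\end{smallmatrix}\bigr)$, whose range is $\Ran A=(\Ker A)^\perp\subseteq (\Ran Z)^\perp$. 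The decisive observation is therefore that $P_1(0)Z=ZP_1(0)=0$.

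Next I would bound the relevant products. Since $P_1(\tilde E)Z=(P_1(\tilde E)-P_1(0))Z$ and Lemma \ref{gt} gives $\|P_1(\tilde E)-P_1(0)\|=O(b)$, we immediately obtain $\|P_1(\tilde E)Z\|=O(b^2)$. Multiplying on the right by $P_0(\tilde E)$, whose norm is $1+O(b)$, yields $\|P_1(\tilde E)ZP_0(\tilde E)\|=O(b^2)$. For the second product, also use $ZP_1(0)=0$ to write
$$P_1(\tilde E)ZP_1(\tilde E)=P_1(\tilde E)Z(P_1(\tilde E)-P_1(0)),$$
which combined with the previous estimate gives $\|P_1(\tilde E)ZP_1(\tilde E)\|=O(b^3)$.

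Finally, applying Wihler's inequality \eqref{lipfrac} (with the reference matrix equal to $0$) translates these operator-norm bounds into the claimed $1/p$-power estimates. Note that $P_1ZP_1$ is automatically positive semi-definite, since $P_1^*=P_1$ and $Z\geq 0$, so its $1/p$-power is well defined in the standard sense; for the non-Hermitian product $P_1ZP_0$ the fractional power is interpreted as elsewhere in the paper (via the modulus / polar decomposition). Local independence from $\alpha^+$ is inherited directly from Lemma \ref{gt}, exactly as in the previous proof. The argument is essentially a bookkeeping exercise and the only conceptual input is the orthogonality $\Ran P_1(0)\perp\Ran Z$, which buys one extra factor of $b$ for each $P_1(\tilde E)$ adjacent to $Z$; the main potential obstacle is therefore only being systematic about tracking which blocks of $P_0(\tilde E)$ and $P_1(\tilde E)$ are $O(1)$, $O(b)$, or smaller.
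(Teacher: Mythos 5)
Your operator-norm bounds on $P_1(\tilde E)ZP_0(\tilde E)$ and $P_1(\tilde E)ZP_1(\tilde E)$ are correct, and the mechanism driving them --- $P_1(0)Z=ZP_1(0)=0$ together with $\|P_1(\tilde E)-P_1(0)\|=O(b)$ from Lemma \ref{gt}, so that each $P_1(\tilde E)$ adjacent to $Z$ contributes an extra factor of $b$ --- is exactly what the paper's proof exploits. The paper does it by writing the products out in the block form given by Lemma \ref{gt} and multiplying; your version is the coordinate-free phrasing of the same computation, and up to that point it is clean and correct.

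The genuine gap is the final step, where you assert that Wihler's inequality \eqref{lipfrac}, applied with reference matrix $0$, converts $\|M\|=O(b^k)$ into $\|M^{1/p}\|=O(b^k)$. It does not: Wihler gives $\|M^{1/p}\|\leq n^{(p-1)/2}\|M\|^{1/p}=O(b^{k/p})$, which for $1<p<3$ is strictly weaker than $O(b^k)$. So what your argument actually yields is $(P_1ZP_0)^{1/p}=O(b^{2/p})$ and $(P_1ZP_1)^{1/p}=O(b^{3/p})$, not the exponents as stated. As it happens, the $1/p$ in the lemma statement appears to be a slip (likely carried over from Lemma \ref{gt1}): the paper's own proof establishes only $P_1ZP_0=O(b^2)$, $P_1ZP_1=O(b^3)$ without any fractional power, the invocation in the proof of Theorem \ref{thm_frac_pow} uses precisely those unpowered estimates (to conclude $B_1=O(\|E\|^3)$, $C_1=O(\|E\|^2)$), and $P_1ZP_0$ is not Hermitian, so $(P_1ZP_0)^{1/p}$ is not even defined by the functional calculus the paper uses. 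You should therefore drop the Wihler step entirely and end with the operator-norm bounds --- the rest of your argument is sound.
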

\begin{proof}
By Lemma \ref{gt} and analogous computations as in the previous proof we get $$P_1(\tilde E)Z P_0(\tilde E)=
\left(
\begin{array}{c}
  \Lambda_{\alpha^+}^{-1}\tilde C  \\
0 \\
\end{array}
\right)\Lambda_\zeta \left(
\begin{array}{c}
-\Lambda_{\alpha^+}^{-1}\tilde C  \\
 I \\
\end{array}
\right)^*+O(b^3)=O(b^2),
$$
and $$P_1(\tilde E)Z P_1(\tilde E)=
\left(
\begin{array}{c}
  \Lambda_{\alpha^+}^{-1}\tilde C  \\
 0\\
\end{array}
\right)\Lambda_\zeta \left(
\begin{array}{c}
  \Lambda_{\alpha^+}^{-1}\tilde C  \\
 0 \\
\end{array}
\right)^*+O(b^3)=O(b^3),
$$
\end{proof}

With these observations we are ready for the proof of Theorem \ref{thm_frac_pow}. We satisfy with showing the result in the case $A=\Lambda_\alpha$ and $U=I$, since the general case follows easily from this, as noted initially.

\begin{proof}[Proof of Theorem \ref{thm_frac_pow}]
Note that \begin{equation}\label{tgf}\left(
                                                                 \begin{array}{cc}
                                                                   \Lambda_{\alpha^+}+B & C \\
                                                                   C^* & C^*(\Lambda_{\alpha^+}+B)^{-1}C \\
                                                                 \end{array}
                                                               \right)\left(
                                                                 \begin{array}{c}
                                                                   (\Lambda_{\alpha^+}+B)^{-1}C \\
                                                                   -I \\
                                                                 \end{array}
                                                               \right)=\left(
                                                                 \begin{array}{c}
                                                                   0 \\
                                                                   0 \\
                                                                 \end{array}
                                                               \right)\end{equation}
so the former matrix has rank $l=\mathsf{rank}(A)$ (as long as $\|E\|$ is sufficiently small). It follows that the shape of $t^{1/p}$ near 0 is irrelevant for \begin{equation}\label{lo9} \left(
                                                                 \begin{array}{cc}
                                                                   \Lambda_{\alpha^+}+B & C \\
                                                                   C^* & C^*(\Lambda_{\alpha^+}+B)^{-1}C \\
                                                                 \end{array}
                                                               \right)^{1/p}.\end{equation}
In fact, we can replace $t^{1/p}$ by any $C^1-$function $f_p$ which equals 0 in a neighborhood of 0 and equals $t^{1/p}$ in the vicinity of $\alpha^+.$ Note that $[f_p,\alpha]$ (as defined in \eqref{flambda}) then equals $[(\cdot)^{1/p},\alpha]_1$ (as defined above \eqref{flambda2}), except for the fourth quadrant. It follows by Theorem \ref{thmdk} that \eqref{lo9}, as a function of $(B,C)$, is Fr\'{e}chet differentiable and
\begin{equation*}\begin{aligned}&\left(
                                                                 \begin{array}{cc}
                                                                   \Lambda_{\alpha^+}+B & C \\
                                                                   C^* & C^*(\Lambda_{\alpha^+}+B)^{-1}C \\
                                                                 \end{array}
                                                               \right)^{1/p}=\left(
                                                                 \begin{array}{cc}
                                                                   \Lambda_{\alpha^+} & 0 \\
                                                                   0 & 0 \\
                                                                 \end{array}
                                                               \right)^{1/p}+[f_p,\alpha]\circ \left(
                                                                 \begin{array}{cc}
                                                                   B & C \\
                                                                   C^* & C^*(\Lambda_{\alpha^+}+B)^{-1}C \\
                                                                 \end{array}                                                         \right)\\&+O(\|E\|^2)=\Lambda_{\alpha}^{1/p}+[(\cdot)^{1/p},\alpha]_1\circ \left(
                                                                 \begin{array}{cc}
                                                                   B & C \\
                                                                   C^* & 0 \\
                                                                 \end{array}                                                         \right)+O(\|E\|^2)\end{aligned}\end{equation*}
where $O(\|E\|^2)\leq O(\|E\|^r)$ since $r<2$. For simplicity, the local independence of the ordo terms on $\alpha^+$ is left implicit in the remainder.
Comparing this with \eqref{lo}, we see that it is sufficient to show that
\begin{equation}\label{lo2}\begin{aligned}&\left(
                                                                 \begin{array}{cc}
                                                                   \Lambda_{\alpha^+}+B & C \\
                                                                   C^* & C^*(\Lambda_{\alpha^+}+B)^{-1}C+D \\
                                                                 \end{array}
                                                               \right)^{1/p}=\\&\left(
                                                                 \begin{array}{cc}
                                                                   \Lambda_{\alpha^+}+B & C \\
                                                                   C^* & C^*(\Lambda_{\alpha^+}+B)^{-1}C \\
                                                                 \end{array}
                                                               \right)^{1/p}+\left(
                                                                 \begin{array}{cc}
                                                                   0 & 0 \\
                                                                   0 & D^{1/p} \\
                                                                 \end{array}
                                                               \right)+O(\|E\|^r).\end{aligned}\end{equation}
It is clearly no restriction to assume that the basis has been chosen so that $D$ is diagonal; $D=\Lambda_\delta$. We let $\Delta$ be the vector obtained by adding zeros to $\delta$ so that $\Lambda_\Delta=
\left(
\begin{array}{cc}
0 & 0 \\
0 & \Lambda_\delta \\
\end{array}
\right)$. Set $\tilde E=\left(
\begin{array}{cc}
B & C \\
C^* & C^*(\Lambda_{\alpha^+}+B)^{-1}C \\
\end{array}
\right)$ and set $A_1=\Lambda_{\alpha}+\tilde E$. We denote the eigenvalues of $A_1$ by $\alpha_1$ and apply the spectral theorem to get a unitary matrix $ V$ such that $$\left(
                                                                 \begin{array}{cc}
                                                                   \Lambda_{\alpha^+}+B & C \\
                                                                   C^* & C^*(\Lambda_{\alpha^+}+B)^{-1}C \\
                                                                 \end{array}
                                                               \right)= V\left(
                                                                 \begin{array}{cc}
                                                                   \Lambda_{\alpha_1^+} & 0 \\
                                                                   0 & 0 \\
                                                                 \end{array}
                                                               \right) V^*$$
where ${\alpha_1^+}$ is the positive part of $\alpha_1$. We then have
\begin{equation*} V^*\left(
                                                                 \begin{array}{cc}
                                                                   \Lambda_{\alpha^+}+B & C \\
                                                                   C^* & C^*(\Lambda_{\alpha^+}+B)^{-1}C+\Lambda_\delta \\
                                                                 \end{array}
                                                               \right)^{1/p} V=
\left(
                                                                 \begin{array}{cc}
                                                                   \Lambda_{\alpha_1^+}+B_1 & C_1 \\
                                                                   C_1^* &  D_1 \\
                                                                 \end{array}
                                                               \right)^{1/p}\end{equation*}
where $B_1,C_1,D_1$ represent $ V^*\Lambda_\Delta  V$. More precisely, if we denote by $ V_1$ the matrix with the $l$ first columns of ${V}$ and by $ V_0$ the matrix with the $m$ remaining columns, we have $P_1( E)= V_1 V_1^*$ and $P_0( \tilde E)= V_0 V_0^*$ whereby it follows that e.g. $B_1= V_1^* \Lambda_\Delta  V_1$ is a matrix representation of  $P_1(\tilde E) \Lambda_\Delta P_1(\tilde E)$ on the subspace $\Ran P_1(\tilde E)$. Similarly, $C_1= V_1^* \Lambda_\delta  V_0$ corresponds to $P_1(\tilde E) \Lambda_\Delta P_0(\tilde E)$ and $D_1$ to $P_0(\tilde E) \Lambda_\Delta P_0(\tilde E)$. In particular $D_1\geq 0$. Clearly $\|\Lambda_\delta\|$ and $\|\tilde E\|$ are $O(\|E\|)$ so by Lemma \ref{gt2}, applied with $Z=\Lambda_{\Delta}$ and $b$ some suitable constant times $\|E\|$, we conclude that $B_1=O(\|E\|^3)$, $C_1=O(\|E\|^2)$ and $D_1=O(\|E\|)$, where the last follows from Lemma \ref{gt1}. This lemma, expressed in the new basis, also gives that $$\left(
                                                                 \begin{array}{cc}
                                                                   B_1 & C_1 \\
                                                                   C_1^* & D_1 \\
                                                                 \end{array}
                                                               \right)^{1/p}=\left(
                                                                 \begin{array}{cc}
                                                                   0 & 0 \\
                                                                   0 & D_1^{1/p} \\
                                                                 \end{array}
                                                               \right)+O(\|E\|^r).$$ Multiplying by $V^*$ from the left and $ V$ from the right, the identity \eqref{lo2} is seen to be equivalent with
\begin{equation}\label{lo1}\begin{aligned}&\left(
                                                                 \begin{array}{cc}
                                                                   \Lambda_{\alpha_1^+}+B_1 & C_1 \\
                                                                   C_1^* & D_1 \\
                                                                 \end{array}
                                                               \right)^{1/p}=\left(
                                                                 \begin{array}{cc}
                                                                   \Lambda_{\alpha_1^+} & 0 \\
                                                                   0 & 0 \\
                                                                 \end{array}
                                                               \right)^{1/p}+\left(
                                                                 \begin{array}{cc}
                                                                   0 & 0 \\
                                                                   0 &  D_1^{1/p} \\
                                                                 \end{array}
                                                               \right)+O(\|E\|^r).\end{aligned}\end{equation} As usual, we can assume that $V$ was chosen so that $D_1$ is diagonal; $D_1=\Lambda_{\delta_1}$. To complete the proof it suffices to prove \eqref{lo1} under this assumption.

We now introduce \begin{equation}\label{eigv5}V_{ap}=\left(
                                                                 \begin{array}{cc}
                                                                   I & -\Lambda_{\alpha_1^+}^{-1} C_1 \\
                                                                   C_1^*\Lambda_{\alpha_1^+}^{-1}  & I \\
                                                                 \end{array}
                                                               \right).\end{equation} and note that \begin{equation}\label{spec_dec_pert3}\left(
                                                                 \begin{array}{cc}
                                                                   \Lambda_{\alpha_1^+}+B_1 & C_1 \\
                                                                   C_1^* & \Lambda_{\delta_1} \\
                                                                 \end{array}
                                                               \right)= V_{ap}\left(
                                                                 \begin{array}{cc}
                                                                   \Lambda_{\alpha_1^+} & 0 \\
                                                                   0 & \Lambda_{\delta_1} \\
                                                                 \end{array}
                                                               \right)V_{ap}^*+O(\|E\|^3).\end{equation}
As in Lemma \ref{gt1} we can apply the Gram-Schmidt process to $V_{ap}$. The scalar product of any two columns in the first part of the matrix (i.e. quadrant 1 and 3, lets denote it $V_{ap,1}$) is $O(\|E\|^4)$, and the scalar product between columns from the first and the second part (quadrant 2 and 4, lets denote it $V_{ap,0}$) have scalar products which are $O(\|E\|^2)$. It thus follows that there is a unitary matrix $W$ with $V_{ap,1}=W_1+O(\|E\|^4)$ and $V_{ap,0}=W_0+O(\|E\|^2)$. Equality \eqref{spec_dec_pert3} thus implies that \begin{align*}&\left(
                                                                 \begin{array}{cc}
                                                                   \Lambda_{\alpha_1^+}+B_1 & C_1 \\
                                                                   C_1^* & \Lambda_{\delta_1} \\
                                                                 \end{array}
                                                               \right)= W_1\Lambda_{\alpha_1^+}W_1^*+O(\|E\|^4)+W_0\Lambda_{\delta_1}W_0^*+O(\|E\|^3)=\\& W\left(
                                                                 \begin{array}{cc}
                                                                   \Lambda_{\alpha_1^+} & 0 \\
                                                                   0 & \Lambda_{\delta_1} \\
                                                                 \end{array}
                                                               \right)W^*+O(\|E\|^3)\end{align*}
which in combination with Wihler's inequality \eqref{lipfrac} yields that \begin{equation*}\begin{aligned}&\left(
                                                                 \begin{array}{cc}
                                                                   \Lambda_{\alpha_1^+}+B_1 & C_1 \\
                                                                   C_1^* & \Lambda_{\delta_1} \\
                                                                 \end{array}
                                                               \right)^{1/p}=W\left(
                                                                 \begin{array}{cc}
                                                                   \Lambda_{\alpha_1^+} & 0 \\
                                                                   0 & \Lambda_{\delta_1} \\
                                                                 \end{array}
                                                               \right)^{1/p}W^*+O(\|E\|^{3/p})=\\&\left(
                                                                 \begin{array}{cc}
                                                                   \Lambda_{\alpha_1^+} & 0 \\
                                                                   0 & \Lambda_{\delta_1} \\
                                                                 \end{array}
                                                               \right)^{1/p}+O(\|E\|^{2})+O(\|E\|^{3/p})=\left(
                                                                 \begin{array}{cc}
                                                                   \Lambda_{\alpha_1^+}^{1/p} & 0 \\
                                                                   0 & \Lambda_{\delta_1}^{1/p} \\
                                                                 \end{array}
                                                               \right)+O(\|E\|^r)\end{aligned}\end{equation*} as $r\leq \min (2,3/p)$, which is \eqref{lo1} and the proof is complete.

\end{proof}

\textit{Remarks:} 1. Obviously, it is an interesting open problem how a first order formula would look like for $p\geq 3$. The expression \eqref{lo} is of course applicable for $p\geq 3$, so maybe all that is needed is a more careful analysis of the size of the error term, we leave this as an open problem.

2. If instead we are interested in $A^s$ where $s>1$, then the Daleckii-Krein theorem applies upon defining $t^s\equiv 0$ for $t<0$. Formula \eqref{lo} then applies upon switching $1/p$ with $s$ and $[(\cdot)^{1/p},\alpha]_1$ for $[(\cdot)^{s},\alpha]$.

%

\section{Perturbation of the matrix-modulus}\label{sec3}


In this final section we consider the perturbation of $|X|=\sqrt{X^*X}$, where $X$ is no longer confined to be Hermitian. There is plenty of prior work devoted to understanding the perturbation theory for this operation, see Ch.~X.2 and X.6 of \cite{bhatia2013matrix} as well as the articles \cite{barrlund1990perturbation,bhatia1994first,cavaretta2003lipschitz,kato1973continuity,mathias1993perturbation}, but none contain a formula reminiscent of the one we will present in Theorem \ref{modulus}.

We first remark that the non-square case can be treated as a special case of the square case, by zero-padding, and hence we assume that $X$ is a square $n\times n-$matrix.
Consider $X+Z$ where $Z$ is the perturbation. Let $X$ have singular value decomposition equal to $X=U\Lambda_{\sigma} V^*,$ where $\sigma$ are the singular values. As before assume that there are $l$ non-zero ones and denote the corresponding vector of positive singular values by ${\sigma^+}$. The kernel of $X$ then has dimension $m=n-l$.
Since $X+Z=U(\Lambda_\sigma+U^*EV)V^*$, it is easy to see that it suffices to consider $\Lambda_{\sigma}+\check Z$ where $\check Z=
U^*EV$. Furthermore, if $\Lambda_{\sigma}$ has a kernel, we divide $\check Z$ into the subblocks \begin{align*}
&\left(
\begin{array}{cc}
\check Z_{11} & \check Z_{12} \\
\check Z_{21}  & \check Z_{22} \\
\end{array}
\right)
\end{align*}
in analogy with previous sections. We introduce $\Xi_{\sigma^+}$ as the $l\times l$-matrix defined via
\begin{equation}\label{fsigma}(\Xi_{\sigma^+})_{i,j}=
  \frac{1}{\sigma_i+\sigma_j}
\end{equation} We then have
\begin{theorem}\label{modulus}
$$|X+Z|=|X|+V\left(
\begin{array}{cc}
\Xi_{\sigma^+}\circ(\Lambda_{\sigma^+}\check  Z_{11}+\check  Z_{11}^* \Lambda_{\sigma^+})& \check  Z_{12} \\
\check Z_{12}^*  & |\check Z_{22}| \\
\end{array}
\right)V^*+O(\|Z\|^{3/2}).$$
\end{theorem}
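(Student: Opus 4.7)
The plan is to reduce Theorem \ref{modulus} to Theorem \ref{thm_frac_pow} applied with $p=2$. Set $A:=X^*X=V\Lambda_{\sigma^2}V^*$ and $\tilde E:=(X+Z)^*(X+Z)-X^*X=X^*Z+Z^*X+Z^*Z$. Both $A$ and $A+\tilde E$ are positive semi-definite, $\|\tilde E\|=O(\|Z\|)$, and $|X+Z|=(A+\tilde E)^{1/2}$. For $p=2$ the theorem's exponent is $r=\min(1+1/p,3/p)=3/2$, so the resulting expansion will carry the correct error $O(\|\tilde E\|^{3/2})=O(\|Z\|^{3/2})$. In Theorem \ref{thm_frac_pow} the roles of $U$, $\alpha$ and $\hat E$ are played by $V$, $\sigma^2$ and $V^*\tilde E V=\Lambda_\sigma\check Z+\check Z^*\Lambda_\sigma+\check Z^*\check Z$.

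Next I would read off the pieces of the decomposition \eqref{spec_dec2} by straightforward block multiplication: $B=\Lambda_{\sigma^+}\check Z_{11}+\check Z_{11}^*\Lambda_{\sigma^+}+O(\|Z\|^2)$, $C=\Lambda_{\sigma^+}\check Z_{12}+O(\|Z\|^2)$, and $(V^*\tilde E V)_{22}=\check Z_{12}^*\check Z_{12}+\check Z_{22}^*\check Z_{22}$. The Schur-complement identity then defines $D=(V^*\tilde E V)_{22}-C^*(\Lambda_{(\sigma^+)^2}+B)^{-1}C$, and a Neumann expansion of $(\Lambda_{(\sigma^+)^2}+B)^{-1}$ combined with the cancellation $\Lambda_{\sigma^+}\Lambda_{(\sigma^+)^2}^{-1}\Lambda_{\sigma^+}=I$ shows that the leading contribution $\check Z_{12}^*\check Z_{12}$ of $C^*(\Lambda_{(\sigma^+)^2}+B)^{-1}C$ exactly cancels the quadratic term in $(V^*\tilde E V)_{22}$, giving $D=\check Z_{22}^*\check Z_{22}+O(\|Z\|^3)$. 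Positivity $D\succeq 0$ comes for free from $(X+Z)^*(X+Z)\succeq 0$ by the standard Schur criterion, so Theorem \ref{thm_frac_pow} applies.

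I would then evaluate the weight matrix $[(\cdot)^{1/2},\sigma^2]_1$ block by block. On the 11-block, $(\sigma_i-\sigma_j)/(\sigma_i^2-\sigma_j^2)=1/(\sigma_i+\sigma_j)$, which is precisely $\Xi_{\sigma^+}$; on the 12-block, where $\alpha_j=0$, the entries reduce to $1/\sigma_i$, so Hadamard-multiplying $C=\Lambda_{\sigma^+}\check Z_{12}+O(\|Z\|^2)$ absorbs the $\Lambda_{\sigma^+}$ and leaves $\check Z_{12}+O(\|Z\|^2)$; on the 22-block the weights equal $1$ by the definition \eqref{flambda2}, so $D^{1/2}$ is untouched. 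Wihler's inequality \eqref{lipfrac} then upgrades $D=\check Z_{22}^*\check Z_{22}+O(\|Z\|^3)$ into $D^{1/2}=|\check Z_{22}|+O(\|Z\|^{3/2})$. Substituting into \eqref{lo} and conjugating by $V$ yields the claimed formula, all leftover corrections being bounded by $O(\|Z\|^{3/2})$.

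The main obstacle is the Schur-complement bookkeeping $D=\check Z_{22}^*\check Z_{22}+O(\|Z\|^3)$: the quadratic-in-$\check Z_{12}$ piece of $(V^*\tilde E V)_{22}$ must cancel exactly against the leading term of $C^*(\Lambda_{(\sigma^+)^2}+B)^{-1}C$, and one must also check that the $O(\|Z\|^3)$ remainder is uniform in $\sigma^+$ on compact sets of strictly positive values, in order to pass through the ``locally independent of $\sigma^+$'' clause in Theorem \ref{thm_frac_pow}. Once this is in place, the remainder of the proof is a direct substitution and one application of Wihler's inequality.
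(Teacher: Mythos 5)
Your proposal is correct and follows essentially the same route as the paper: reduce $|X+Z|$ to $((X+Z)^*(X+Z))^{1/2}$, identify $\alpha=\sigma^2$ and the blocks $B,C,D$ in the decomposition \eqref{spec_dec2}, invoke Theorem \ref{thm_frac_pow} with $p=2$ (hence $r=3/2$), and read off the Hadamard weights block by block. The only structural difference is bookkeeping: the paper sets $D=\check Z_{22}^*\check Z_{22}$ exactly, leaves the Schur-complement mismatch as an explicit $O(\|Z\|^3)$ term in the $(2,2)$ block, and applies Wihler's inequality once, \emph{before} Theorem \ref{thm_frac_pow}, to strip that error off the full matrix; you instead take $D$ to be the exact Schur complement, show it equals $\check Z_{22}^*\check Z_{22}+O(\|Z\|^3)$, and apply Wihler's inequality \emph{after} the theorem, to $D^{1/2}$ alone. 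Both organizations give $O(\|Z\|^{3/2})$ and both are valid; the paper's is marginally more economical since one application of Wihler covers everything while you must additionally note the uniformity of the Neumann-expansion remainder in $\sigma^+$ (a point the paper handles implicitly through $\|\Lambda_{\sigma^+}^{-1}\|$ being bounded on compacta of strictly positive singular values).
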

\textit{Remarks:} 1. It is a quite curious fact that $\check Z_{21}$ has no influence on the first order term.

2. In contrast to previous sections, $\check Z$ need not be self-adjoint even if $Z$ is. Indeed, if $X=U\Lambda_{\sigma} V^*$ is self-adjoint then $\alpha_j=\pm\sigma_j$ for all $j$. If $S$ is a diagonal matrix which is $\pm 1$ according to the sign of this identity, it is easy to see that $U=VS$. If we set $\hat Z=V^*Z V$ (which thus is self-adjoint) we then get $$\check Z=S\hat Z.$$
This observation also shows that if $X$ has no negative eigenvalues, then the formula simplifies to $$|X+Z|=X+V\left(
\begin{array}{cc}
\hat  Z_{11} & \hat  Z_{12} \\
\hat Z_{21}  & |\hat Z_{22}| \\
\end{array}
\right)V^*+O(\|Z\|^{3/2}).$$

3. Now suppose that $X,Z\in\H_n$ and that $X$ is invertible. Introducing the matrix $\Xi_{\sigma}^\alpha$ by $$\Xi^\alpha_\sigma(i,j)=\frac{\alpha_i+\alpha_j}{\sigma_i+\sigma_j},$$
the formula simplifies to $$|X+Z|=|X|+V
\Big(\Xi_{\sigma}^\alpha\circ\hat  Z \Big)V^*+O(\|Z\|^{3/2}),$$ which follows by noting that $\hat Z_{11}=\hat Z$, $ \Lambda_{\sigma^+} S= \Lambda_{\sigma} S=\Lambda_{\alpha}$ so $\Xi_{\sigma^+}\circ(\Lambda_{\sigma^+} \check  Z_{11}+\check  Z_{11}  \Lambda_{\sigma^+})=\Xi_{\sigma^+}\circ(\Lambda_{\alpha}\hat  Z+\hat  Z \Lambda_{\alpha})$. In particular $\Xi_{\sigma}^\alpha$ is a matrix whose elements are less than 1 in modulus, since $|\alpha_i|=\sigma_i$. However, this formula can be deduced from the Daleckii-Krein theorem without use of Theorem \ref{modulus}, so it is unlikely that this observation leads to something new. It is however curious that this beautiful formula is not found explicitly anywhere in the literature.

4. For a long time it was an open problem whether $X\mapsto |X|$ was Lipschitz-continuous with respect to the operator norm (in the infinite-dimensional case), which was finally resolved in the negative by T.~Ando \cite{kato1973continuity} (with help from A.~McIntosh \cite{mcintosh1971counterexample}), even if both $X$ and $Z$ are self-adjoint. Ando's counterexample can thus be understood as a consequence of that Hadamard-multiplication with $\Xi_\sigma^\alpha$ is not bounded independently of the dimension of the underlying space.

\begin{proof}
We have \begin{equation}\label{p}(X+Z)^*(X+Z)=V(\Lambda_{\sigma}^2+\Lambda_{\sigma} \check Z+\check Z^*\Lambda_{\sigma}+\check Z^*\check Z)V^*.\end{equation}
Setting $A=\Lambda_{\sigma}^2,$ we have $\alpha=\sigma^2$  so $\Lambda_{ \alpha^+}=\Lambda_{\sigma^+}^2$
and, expanding \eqref{p} in the four corresponding subblocks, we get $$V\left(\begin{array}{cc}
 \Lambda_{\sigma^+}^2+\Lambda_{\sigma^+}\check Z_{11}+\check Z_{11}\Lambda_{\sigma^+}+O(\| Z\|^2) & \Lambda_{\sigma^+}\check Z_{12}+O(\| Z\|^2) \\
(\Lambda_{\sigma^+}\check Z_{12}+O(\| Z\|^2))^*  & \check Z_{12}^*\check Z_{12}+  \check Z_{22}^*\check Z_{22}+O(\|Z\|^3)\\
\end{array}
\right)V^*$$ We let ${B}$ be the perturbation in the first quadrant, i.e. the term denoted $\Lambda_{\sigma^+}\check Z_{11}+\check Z_{11}\Lambda_{\sigma^+}+O(\| Z\|^2)$ above, and similarly we set $ C=\Lambda_{\sigma^+}\check Z_{12}+O(\| Z\|^2).$ This gives that $C^*(\Lambda_{\sigma^+}^2+B)^{-1}C$ simply becomes $\check Z_{12}^*\check Z_{12}+O(\|Z\|^3)$, and therefore we set $ D=\check Z_{22}^*\check Z_{22}$ (no error term).
With this notation, \eqref{p} can be expressed as $$V\left(\begin{array}{cc}
 \Lambda_{\sigma^+}^2+ B & C \\
 C^*  &  C^*(\Lambda_{\sigma^+}^2+ B)^{-1} C+ D+O(\|Z\|^3) \\
\end{array}
\right)V^*$$ and by Wihler's inequality \eqref{lipfrac} we get that $\sqrt{(X^*+Z^*)(X+Z)}$ can be expressed as \begin{equation}\label{o0}V\sqrt{\left(
\begin{array}{cc}
\Lambda_{\sigma^+}^2 & 0 \\
0  & 0 \\
\end{array}
\right)+\left(\begin{array}{cc}
 B &  C \\
 C^*  &  C^*(\Lambda_{\sigma^+}^2+ B)^{-1} C+{D} \\
\end{array}
\right)}V^*+O(\|Z\|^{3/2}).\end{equation}
We shall now apply Theorem \ref{thm_frac_pow} (with $p=2$) to $ A=\Lambda^2_{\sigma}$ and the Hermitian perturbation $ E=\left(\begin{array}{cc}
 B &  C \\
 C^*  &  C^*(\Lambda_{\sigma^+}^2+ B)^{-1} C+{ D} \\
\end{array}
\right)$ (where it is not necessary to put a hat on $E$ since $ A$ is already diagonal). It is easy to see that \eqref{flambda2} applied to $\sqrt{\cdot}$ and $\alpha=\sigma^2$ yields the $n\times n$-matrix defined via
\begin{equation*}(\Xi_{\sigma})_{i,j}=([\sqrt{\cdot},\sigma^2]_1)_{i,j}=
\left\{\begin{array}{ll}
  \frac{1}{\sigma_i+\sigma_j} & \sigma_i>0\text{ or } \sigma_j>0 \\
  1 & \sigma_i=\sigma_j=0 \\
\end{array}\right.
\end{equation*}
where $\Xi_{\sigma^+}$ is the $l\times l$ upper right corner of this matrix. Hence
\begin{align*}&\sqrt{\left(
\begin{array}{cc}
\Lambda_{\sigma^+}^2 & 0 \\
0  & 0 \\
\end{array}
\right)+\left(\begin{array}{cc}
 B &  C \\
 C^*  &  C^*(\Lambda_{\sigma^+}^2+ B)^{-1} C+{ D} \\
\end{array}
\right)}=\\&\sqrt{\left(
\begin{array}{cc}
\Lambda_{\sigma^+}^2 & 0 \\
0  & 0 \\
\end{array}
\right)}+\Xi_\sigma\circ
\left(\begin{array}{cc}
 B &  C \\
 C^*  & \sqrt{D} \\
\end{array}
\right)+O(\| E\|^{3/2})\\&\left(
\begin{array}{cc}
\Lambda_{\sigma^+} & 0 \\
0  & 0 \\
\end{array}
\right)+\Xi_\sigma\circ
\left(\begin{array}{cc}
\Lambda_{\sigma^+}\check E_{11}+\check E_{11}^*\Lambda_{\sigma^+} & \Lambda_{\sigma^+} \check E_{12} \\
\check E_{12}^*\Lambda_{\sigma^+}  & |E_{22}| \\
\end{array}
\right)+O(\|Z\|^2)+O(\| E\|^{3/2})=\\&\left(
\begin{array}{cc}
\Lambda_{\sigma^+} & 0 \\
0  & 0 \\
\end{array}
\right)+\left(
\begin{array}{cc}
\Xi_{\sigma^+}\circ(\Lambda_{\sigma^+}\check  E_{11}+\check  E_{11}^* \Lambda_{\sigma^+})& \check  E_{12} \\
\check E_{12}^*  & |\check E_{22}| \\
\end{array}
\right)+O(\| Z\|^{2})+O(\|E\|^{3/2}).\end{align*}
Clearly $O\| E\|=O(\|Z\|)$. Inserting this into \eqref{o0}, the theorem follows.
\end{proof}


\bibliographystyle{plain}
\bibliography{MCref}

\begin{thebibliography}{10}

\bibitem{barrlund1990perturbation}
Anders Barrlund.
\newblock Perturbation bounds on the polar decomposition.
\newblock {\em BIT Numerical Mathematics}, 30(1):101--113, 1990.

\bibitem{bhatia1994first}
Rajendra Bhatia.
\newblock First and second order perturbation bounds for the operator absolute
  value.
\newblock {\em Linear algebra and its applications}, 208:367--376, 1994.

\bibitem{bhatia2013matrix}
Rajendra Bhatia.
\newblock {\em Matrix analysis}, volume 169.
\newblock Springer Science \& Business Media, 2013.

\bibitem{boyd2004convex}
Stephen Boyd and Lieven Vandenberghe.
\newblock {\em Convex optimization}.
\newblock Cambridge university press, 2004.

\bibitem{carlsson2018perturbation3}
Marcus Carlsson.
\newblock A new functional calculus for hermitian matrices based on
  vector-fields, and its perturbation theory.
\newblock {\em To appear}.

\bibitem{carlsson2018perturbation1}
Marcus Carlsson.
\newblock Perturbation theory for the spectral decomposition of hermitian
  matrices.
\newblock {\em arXiv preprint arXiv:1809.09480}, 2018.

\bibitem{cavaretta2003lipschitz}
Alfred~S Cavaretta and Laura Smithies.
\newblock Lipschitz-type bounds for the map $a\mapsto| a|$ on $l (h)$.
\newblock {\em Linear algebra and its applications}, 360:231--235, 2003.

\bibitem{cayley1858ii}
Arthur Cayley.
\newblock A memoir on the theory of matrices.
\newblock {\em Philosophical transactions of the Royal society of London},
  148:17--37, 1858.

\bibitem{daletskii1965integration}
Ju~L Daletskii and SG~Krein.
\newblock Integration and differentiation of functions of hermitian operators
  and applications to the theory of perturbations.
\newblock {\em AMS Translations (2)}, 47(1-30), 1965.

\bibitem{higham2008functions}
Nicholas~J Higham.
\newblock {\em Functions of matrices: theory and computation}, volume 104.
\newblock Siam, 2008.

\bibitem{kato1973continuity}
Tosio Kato.
\newblock Continuity of the map s?| s| for linear operators.
\newblock {\em Proceedings of the Japan Academy}, 49(3):157--160, 1973.

\bibitem{mathias1993approximation}
Roy Mathias.
\newblock Approximation of matrix-valued functions.
\newblock {\em SIAM journal on matrix analysis and applications},
  14(4):1061--1063, 1993.

\bibitem{mathias1993perturbation}
Roy Mathias.
\newblock Perturbation bounds for the polar decomposition.
\newblock {\em SIAM Journal on Matrix Analysis and Applications},
  14(2):588--597, 1993.

\bibitem{mcintosh1971counterexample}
Alan McIntosh.
\newblock Counterexample to a question on commutators.
\newblock {\em Proceedings of the American Mathematical Society},
  29(2):337--340, 1971.

\bibitem{sendov2007higher}
Hristo~S Sendov.
\newblock The higher-order derivatives of spectral functions.
\newblock {\em Linear algebra and its applications}, 424(1):240--281, 2007.

\bibitem{wihler2009holder}
Thomas~P Wihler.
\newblock On the h{\"o}lder continuity of matrix functions for normal matrices.
\newblock {\em Journal of inequalities in pure and applied mathematics},
  10:1--5, 2009.

\end{thebibliography}
\end{document}